\numberwithin{equation}{section}
\newtheorem{thm}{Theorem}[section]
\newtheorem{lem}[thm]{Lemma}
\theoremstyle{definition}
\newtheorem{rem}[thm]{Remark}
\newtheorem{exa}[thm]{Example}
\numberwithin{equation}{section}
\theoremstyle{definition}
\newtheorem*{ack}{Acknowledgements}
\newcommand{\eps}{\varepsilon}
\newcommand{\si}{\sigma}
\newcommand{\re}{\textup{Re}}
\newcommand{\im}{\textup{Im}}
\newcommand{\abs}[1]{\left\lvert#1\right\rvert}
\begin{document}
\title[On the moments of averages of quadratic twists of the M\"obius function]{On the moments of averages of quadratic twists of the M\"obius function} 
\author[Y. Toma]{Yuichiro Toma}
\address{Graduate School of Mathematics, Nagoya University, Chikusa-ku, Nagoya 464-8602, Japan.}
\email{m20034y@math.nagoya-u.ac.jp}

\makeatletter
\@namedef{subjclassname@2020}{\textup{2020} Mathematics Subject Classification}
\makeatother
\subjclass[2020]{11N37, 11L40}
\keywords{Quadratic Dirichlet character, M\"obius function, multivariable Tauberian Theorems}

\begin{abstract}
We consider the moment of quadratic twists of the M\"obius function of the form 
\[
 S_k(X,Y) = \sum_{d\leq X} \left( \sum_{n\leq Y} \left(\frac{8d}{n}\right) \mu(n)\right)^k,
\]
where $\left(\frac{8d}{\cdot}\right)$ is the Kronecker symbol and $d$ runs over positive, odd and square-free integers. We give unconditional results for their asymptotic behaviors. 
\end{abstract} 
\maketitle
\section{Introduction}
Let $\mu$ be the M\"obius function. The Mertens function is defined by
\[
    M(x) := \sum_{n \leq x} \mu(n).
\]
Classically, it is known that $M(x)$ is deeply connected with the Riemann hypothesis (RH). Indeed, the RH is equivalent to $M(x) \ll x^{1/2+\eps}$ for any $\eps>0$ (cf. \cite{So09}). Under the RH, the bound of $M(x)$ was considered by Landau~\cite{L24}, Titchmarsh~\cite{Tit27}, Maier and Montgomery~\cite{MM09} and Soundararajan~\cite{So09}. By refining Soundararajan's method~\cite{So09}, Balazard and de Roton~\cite{BdeR} showed 
\[
M(x) \ll x^\frac{1}{2}\exp\left( (\log x)^\frac{1}{2}(\log\log\log x)^{\frac{5}{2}+\eps}\right).
\]

As a generalization of the Mertens function, for a Dirichlet character $\chi$ modulo $q$, the corresponding generalized Mertens function is defined by
\begin{align}
    \label{twisted Mobius function}
    M(x,\chi) := \sum_{n \leq x} \mu(n) \chi(n).
\end{align}

A similar relationship between the RH and the Mertens function is known for (\ref{twisted Mobius function}). It is related to the generalized Riemann hypothesis (GRH) of the corresponding Dirichlet $L$-function $L(s, \chi)$. Under the GRH, the size of (\ref{twisted Mobius function}) was studied by Halupczok and Suger~\cite{HS} and their result is further refined by Ye~\cite{Ye} which asserts that under the GRH, 
\[
M(x,\chi) \ll x^\frac{1}{2} \exp\left( (\log x)^{\frac{3}{5}+o(1)} \right)
\]
uniformly for $q$ and $x$.

Moreover, unlike the Mertens function, the size of (\ref{twisted Mobius function}) with primitive quadratic characters is affected by the existence or nonexistence of the Siegel zero. 

We write $\chi_d$ for the Kronecker symbol $\left(\frac{d}{\cdot}\right)$. If $d$ is odd and square-free, then $\chi_{8d}$ is a primitive Dirichlet character. Then we consider moments of (\ref{twisted Mobius function}). Let 
\begin{align}
\label{S_k}
    S_k(X,Y) &:= \sideset{}{^*}{\sum}_{0<d\leq X} \left( \sum_{n\leq Y} \chi_{8d}(n) \mu(n)\right)^k
\end{align}
where the asterisk indicates that $d$ runs over odd and square-free integers.

In the case $k=2$, (\ref{S_k}) was introduced by Gao and Zhao~\cite{GZ23}. They considered smoothed sums instead, not (\ref{S_k}) itself. For fixed smooth functions $\Phi(x), W(x)$ that are compactly supported on $(0,\infty)$, they set 
\[
S(X,Y;\Phi,W):=\sideset{}{^*}{\sum}_{d} \left( \sum_{n} \chi_{8d}(n) \mu(n) \Phi\left(\frac{n}{Y}\right)\right)^2 W\left(\frac{d}{X}\right).
\]
Then, they evaluated $S(X,Y;\Phi,W)$ asymptotically.

\begin{thm}[Theorem 1.1 in \cite{GZ23}]
\label{thm:GZ}
With assuming the truth of the GRH, for large $X$ and $Y$ with $Y \ll X^{1-\eps}$, we have
\begin{align*}
S(X,Y;\Phi,W) &= \frac{4}{\pi^2} \tilde{h}(1,1)Z_2XY +O(X^{\frac{1}{2}+\eps}Y^{\frac{3}{2}+\eps}) +O(XY^{\frac{1}{2}+\eps}),
\end{align*}
where 
\begin{align}
\label{ccontribution of Mellin transform}
    \tilde{h}(1,1) &= \int_\mathbb{R} W(x)dx\left( \int_\mathbb{R} \Phi(y) dy\right)^2,
\end{align}
and 
\begin{align}
    \label{Z_2(1)}
    Z_2&= \prod_{p}\left(1-\frac{2}{p(p+1)}\right).
\end{align}
\end{thm}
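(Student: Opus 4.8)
The plan is to expand the square, use the multiplicativity of the Kronecker symbol to turn the problem into an average over the quadratic family, and then isolate the diagonal. Expanding and writing $\chi_{8d}(m)\chi_{8d}(n)=\chi_{8d}(mn)$ (the symbol is real), I would first record
\[
 S(X,Y;\Phi,W)=\sum_{m,n}\mu(m)\mu(n)\,\Phi\Big(\tfrac{m}{Y}\Big)\Phi\Big(\tfrac{n}{Y}\Big)\,C(mn),\qquad C(k):=\sideset{}{^*}{\sum}_{d}\chi_{8d}(k)\,W\Big(\tfrac{d}{X}\Big),
\]
noting that $\mu(m)\mu(n)$ restricts $m,n$ to squarefree integers and the symbol forces $(mn,2)=1$. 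Everything then rests on the character sum $C(k)$.

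To evaluate $C(k)$ I would detect the squarefree condition on $d$ via $\mu^2(d)=\sum_{a^2\mid d}\mu(a)$, write $d=a^2 b$, and apply the Poisson summation formula for the quadratic symbol $\big(\tfrac{\cdot}{k}\big)$ in the variable $b$, in the shape used by Soundararajan. This splits $C(k)$ into a zero-frequency term, which survives only when $k$ is a perfect square, and a dual sum of Gauss-sum type over frequencies $\ell\neq0$ weighted by $\widetilde W(\ell X/(2a^2 k))$. Equivalently, matching the Mellin-transform description of $\tilde h$, I would insert the Mellin transforms of $\Phi$ and $W$ and recast $S(X,Y;\Phi,W)$ as a double contour integral whose integrand features the family average
\[
 \mathcal A(s_1,s_2)=\sideset{}{^*}{\sum}_{d}\frac{W(d/X)}{L(s_1,\chi_{8d})L(s_2,\chi_{8d})}=\sum_{m,n}\frac{\mu(m)\mu(n)}{m^{s_1}n^{s_2}}\,C(mn),
\]
the two viewpoints being linked by $\sum_n\mu(n)\chi(n)n^{-s}=1/L(s,\chi)$.

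The main term comes from the diagonal $mn=\square$; since $m,n$ are squarefree this forces $m=n$, and then $\chi_{8d}(n^2)=1$ for $(n,8d)=1$. The weighted count of odd squarefree $d\le X$ coprime to $n$ is $\sim\tfrac{4}{\pi^2}\big(\int_\R W\big)\prod_{p\mid n}\tfrac{p}{p+1}\,X$, which rearranges the diagonal into an Euler product that factors as $\zeta(s_1+s_2)$ times a product convergent and nonvanishing at $s_1+s_2=1$. Taking the residue at the simple pole $s_1+s_2=1$ produces the main term of size $XY$, with the density constant $\tfrac{4}{\pi^2}$, the arithmetic factor $Z_2$ of \eqref{Z_2(1)} obtained by evaluating the remaining Euler product at $s_1+s_2=1$, and the archimedean factor $\tilde h(1,1)$ of \eqref{ccontribution of Mellin transform} coming from the Mellin data of the weights. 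This is exactly where the GRH enters: to continue $\mathcal A(s_1,s_2)$ from $\re s_i>1$ into the strip $\re s_i>\tfrac12$ one must control $1/L(s_i,\chi_{8d})$, i.e. one needs $L(s,\chi_{8d})$ zero-free off the critical line, uniformly over the family. I expect matching the residue with the Euler product so that the constants collapse to $Z_2$ and $\tilde h(1,1)$ to be a delicate but essentially mechanical computation.

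The genuine obstacle is the off-diagonal $mn\neq\square$, equivalently the nonzero Poisson frequencies. Here one must estimate the Gauss sums together with the oscillation of $\widetilde W$, summing over $m,n\ll Y$ and $d\ll X$; under GRH this cross term is controlled and produces the error $O(X^{1/2+\eps}Y^{3/2+\eps})$, while the tails of the squarefree sieve and the truncation of the $\Phi$-sum contribute the secondary error $O(XY^{1/2+\eps})$. The hypothesis $Y\ll X^{1-\eps}$ is precisely what forces $X^{1/2+\eps}Y^{3/2+\eps}\ll XY$, so that the off-diagonal stays below the main term. Thus the two crucial points are (i) the uniform-in-modulus off-diagonal bound, for which GRH is indispensable, and (ii) the careful extraction of the residue at $s_1+s_2=1$ to recover the precise constant $\tfrac{4}{\pi^2}\tilde h(1,1)Z_2$.
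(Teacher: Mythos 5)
First, note that this statement is not proved in the paper at all: it is Theorem 1.1 of \cite{GZ23}, imported verbatim, and the paper's only comment on its proof is that Gao and Zhao follow Soundararajan's method from \cite{So00} together with input from \cite{SY}. Your outline is consistent with that strategy --- expand the square, reduce everything to the family average $C(mn)$, split into the zero frequency ($mn=\square$) and the dual frequencies, and use GRH to control $1/L(s,\chi_{8d})$ --- so at the level of architecture there is nothing to object to. But as a proof it has a genuine gap exactly where the theorem lives: the off-diagonal estimate. You write that ``under GRH this cross term is controlled and produces the error $O(X^{1/2+\eps}Y^{3/2+\eps})$,'' but no argument is given. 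Bounding the non-principal Poisson frequencies (equivalently, justifying the contour shift of $\mathcal{A}(s_1,s_2)$ into $\re(s_i)>\tfrac12$ uniformly over the family and estimating what remains) is the entire analytic content of the result; the exponents $\tfrac12+\eps$ and $\tfrac32+\eps$, and hence the admissible range $Y\ll X^{1-\eps}$, cannot be read off without actually doing it.

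Second, your own description of the main term does not reproduce the stated constant. On the diagonal $mn=\square$ with $m,n$ squarefree one has $m=n$, so both weight factors are evaluated at the same point and the surviving sum is $\sum_n \mu(n)^2\Phi(n/Y)^2\prod_{p\mid 2n}\tfrac{p}{p+1}$; extracting the residue at $s_1+s_2=1$ (or summing directly) yields the archimedean factor $\int_{\R}W(x)\,dx\cdot\int_{\R}\Phi(y)^2\,dy$, not $\int_{\R}W(x)\,dx\left(\int_{\R}\Phi(y)\,dy\right)^2$ as displayed in \eqref{ccontribution of Mellin transform}. (The two coincide only in the degenerate comparison with the unsmoothed Theorem \ref{thm:1}, where morally $\Phi^2=\Phi$.) So the step you defer as ``delicate but essentially mechanical'' is precisely the step at which your computation and the displayed formula for $\tilde h(1,1)$ part ways: you need either to carry the residue calculation out and confront the discrepancy, or to identify the convention for $\tilde h$ under which the statement is correct. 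A proof that ends by asserting the constants ``collapse to'' the stated ones has not established the statement.
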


In this article we give an asymptotic formula of $S_2(X,Y)$ when the sizes of $X,Y$ satisfy $Y \ll X^{1/3-\eps}$ without assuming the GRH . 

\begin{thm}
\label{thm:1}
For large $X$ and $Y$ with $Y \ll X^{1/3-\eps}$, we have
\begin{align*}
    S_2(X,Y) &= \frac{4}{\pi^2}Z_2 XY+ O(XY^{\frac{7}{12}})+O(X^{\frac{1}{2}+\eps}Y^{2+\eps}) +O(X^\frac{1}{2}Y^{\frac{5}{2}}(\log Y)^2).
\end{align*}
\end{thm}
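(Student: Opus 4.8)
The plan is to open the square, exchange the order of summation, and reduce everything to a character sum in the $d$-aspect. Since $\chi_{8d}$ is completely multiplicative and real, expanding the definition of $S_2(X,Y)$ gives
\[
S_2(X,Y)=\sum_{m,n\le Y}\mu(m)\mu(n)\sum_{0<d\le X}^{*}\chi_{8d}(mn),
\]
where the asterisk restricts $d$ to odd squarefree integers as before, so everything hinges on the inner sum $A(X;\ell):=\sum_{0<d\le X}^{*}\chi_{8d}(\ell)$ with $\ell=mn$. I would detect the squarefree condition on $d$ by $\mu^2(d)=\sum_{a^2\mid d}\mu(a)$ and the oddness by coprimality to $2$, so that $A(X;\ell)$ becomes a sum over the sieving variable $a$ of a quadratic character sum in the remaining variable.

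The leading behaviour is governed by whether $\ell=mn$ is a perfect square. As $m,n$ are squarefree whenever $\mu(m)\mu(n)\ne0$, one has $mn=\square$ exactly when $m=n$; these diagonal terms contribute a quantity of size $XY$, obtained by counting odd squarefree $d\le X$ coprime to $m$ and summing the resulting multiplicative weight over $m\le Y$. However, the diagonal alone does \emph{not} produce the full constant $\tfrac{4}{\pi^2}Z_2$ (the factorised shape $\big(\int_{\R}\Phi\big)^2$ of the constant $\tilde{h}(1,1)$ in Theorem \ref{thm:GZ} already signals this): a genuine secondary main term is hidden in the non-square terms. To capture both uniformly I would pass, via quadratic reciprocity, to the associated two-variable Dirichlet series whose coefficients are $\chi_{8d}(\ell)$, with the cutoffs $m,n\le Y$ and $d\le X$ inverted by Mellin transforms. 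Its meromorphic continuation and functional equation come from Poisson summation for quadratic characters, following Soundararajan; reading off the relevant pole and computing its residue should yield precisely $\tfrac{4}{\pi^2}Z_2\,XY$.

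I would then invoke a multivariable Tauberian theorem to turn this analytic information into the joint asymptotic in $X$ and $Y$. The error terms arise from three sources: the arithmetic evaluation of the main-term weight over $m\le Y$, producing $O(XY^{7/12})$, together with the non-square and dual contributions estimated by Poisson summation after the squarefree sieve. For $mn$ non-square one has square-root cancellation in $d$ and a bound of the shape $A(X;\ell)\ll X^{1/2}\ell^{1/4+\eps}$; summing $|\mu(m)\mu(n)|$ over the $\ll Y^2$ pairs with $\ell=mn\le Y^2$, and using $\sum_{m\le Y}m^{1/4}\asymp Y^{5/4}$, then gives $O(X^{1/2}Y^{5/2}(\log Y)^2)$, while the remainder of the squarefree sieve and the transition terms yield $O(X^{1/2+\eps}Y^{2+\eps})$.

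The main obstacle is the unconditional, uniform treatment of the off-diagonal: isolating the secondary main term correctly, so that the constant is $\tfrac{4}{\pi^2}Z_2$ rather than the naive diagonal value, and controlling the dual sum without recourse to the GRH — in particular sidestepping the possible influence of a Siegel zero that distorts $M(x,\chi_{8d})$. It is exactly the dominant error $X^{1/2}Y^{5/2}$ that dictates the admissible range, since $X^{1/2}Y^{5/2}=o(XY)$ is equivalent to $Y\ll X^{1/3-\eps}$; this is why the theorem is confined to that regime.
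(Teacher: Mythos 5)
Your opening moves match the paper's: expand the square, swap the order of summation, and split according to whether $mn$ is a perfect square, bounding the non-square terms by the P\'olya--Vinogradov-type estimate $\sum_{d\le z}\mu(2d)^2\left(\frac{8d}{\ell}\right)\ll z^{1/2}\ell^{1/4}\log(2\ell)$ (the paper quotes this, together with the asymptotic for square $\ell$, as Lemma \ref{lem:RudSou} from Rudnick--Soundararajan); your derivation of the errors $O(X^{1/2}Y^{5/2}(\log Y)^2)$ and $O(X^{1/2+\eps}Y^{2+\eps})$, and your remark that $X^{1/2}Y^{5/2}=o(XY)$ forces $Y\ll X^{1/3-\eps}$, are all correct. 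The central problem is your claim that the diagonal does not produce the full constant $\frac{4}{\pi^2}Z_2$ and that ``a genuine secondary main term is hidden in the non-square terms.'' This is false in the regime of the theorem. Since $\mu(m)\mu(n)\neq 0$ forces $m,n$ squarefree, $mn=\square$ holds exactly when $m=n$, and the square case of Lemma \ref{lem:RudSou} gives $\frac{X}{\zeta(2)}\prod_{p\mid 2n}\frac{p}{p+1}+O(X^{1/2+\eps}n^{\eps})$ for each diagonal term; factoring out $p=2$ yields the constant $\frac{1}{\zeta(2)}\cdot\frac{2}{3}=\frac{4}{\pi^2}$, and $\sum_{n\le Y}\mu(n)^2\prod_{p\mid n}\frac{p}{p+1}\sim Z_2\,Y$ with $Z_2=\prod_p\bigl(1-\frac{2}{p(p+1)}\bigr)$. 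So the ``naive diagonal value'' \emph{is} $\frac{4}{\pi^2}Z_2$, and the non-square terms are entirely absorbed into $O(X^{1/2}Y^{5/2}(\log Y)^2)=o(XY)$. Consequently the entire apparatus you propose for extracting an off-diagonal main term --- quadratic reciprocity, Poisson summation, the functional equation of a two-variable Dirichlet series, a multivariable Tauberian theorem --- is not needed here, and, as you yourself concede, you do not know how to run it unconditionally; following that route the proof would stall precisely at the point you flag (controlling the dual sum without GRH).

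A second, smaller gap: you assert the error $O(XY^{7/12})$ without any mechanism that produces the exponent $7/12$. In the paper it arises from evaluating the diagonal sum $\sum_{n\le Y}\mu(n)^2\prod_{p\mid n}\frac{p}{p+1}$ by Perron's formula: the generating Dirichlet series factors as $\zeta(s)E(s)$ with $E$ absolutely convergent for $\re(s)>1/2$ and $E(1)=Z_2$, one shifts the contour to $\re(s)=1/2+1/\log Y$, applies the Hardy--Littlewood bound $\zeta(\sigma+it)\ll(1+|t|)^{(1-\sigma)/3}$, and chooses $T=Y^{1/2}$, whence the residue at $s=1$ gives $Z_2Y$ and the contour contributions give $O(Y^{1/2}\cdot Y^{1/12})=O(Y^{7/12})$. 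Some such subconvexity input is essential; a trivial treatment of the diagonal would only give $O(Y^{1/2+\eps} \cdot Y^{1/2})$-type or $O(Y/\log Y)$-type savings, not the stated $Y^{7/12}$.
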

In this restriction $Y \ll X^{1/3-\eps}$, the main term in Theorem \ref{thm:1} coincides with the main term in Theorem \ref{thm:GZ} except for (\ref{ccontribution of Mellin transform}). The third error term $O(X^\frac{1}{2}Y^{\frac{5}{2}}(\log Y)^2)$ comes from the P\'olya-Vinogradov inequality (see Lemma \ref{lem:RudSou} and (\ref{S_k with f})). 

Next, to state our result for $k \geq 3$, we introduce $q = \binom{k}{2}$ linear forms $\left( l^{(1)}, \dots, l^{(q)}\right)$ from $\mathbb{C}^k$ to $\mathbb{C}$ whose restriction to $\mathbb{R}_{\geq 0}$ have values in $\mathbb{R}$ as follows. Let $\bm{x}=(x_1,\dots,x_k) \in \mathbb{C}^k$ and $I_{n}$ be the $n \times n$ identity matrix. We define 
\begin{align*}
    \left( l^{(1)}(\bm{x}), \dots, l^{(q)}(\bm{x})\right) &= (x_1,\dots,x_k) 
    \begin{pmatrix}
    \begin{matrix} 1 & \cdots & 1 \\ & & \\  & \text{\large{$I_{k-1}$}} & \\ & & \\ & & \end{matrix} & \begin{matrix} 0 &\cdots & 0 \\ 1 & \cdots & 1 \\ & & \\ & \text{\large{$I_{k-2}$}} & \\ & & \end{matrix} & \begin{matrix} 0 & \dots & 0 \\  0 & \cdots & 0 \\ 1 & \cdots & 1 \\ & & \\ & \text{\large{$I_{k-3}$}} & \end{matrix} & \begin{matrix} \cdots \\  \cdots \\  \\ & \\ \cdots &  \end{matrix} 
\end{pmatrix}.
\end{align*}
i.e., the above $k \times q$ matrix consists of all column vectors of length $k$ with two entries equal to $1$ and all other entries equal to $0$.

Then, we obtain the following result.

\begin{thm}
\label{thm:2}
For large $X$ and $Y$ with $Y \ll X^{2/3k-\eps}$, we have
\begin{align*}
S_k(X,Y) &= \frac{4}{\pi^2}Z_kXY^\frac{k}{2}Q(\log Y)+ O(XY^{\frac{k}{2}-\theta})+O(X^{\frac{1}{2}+\eps}Y^{k+\eps}) +O(X^\frac{1}{2}Y^{\frac{5k}{4}}(\log Y)^k),
\end{align*}
where $Q \in \mathbb{R}[x]$ is a monic polynomial of exact degree $\binom{k}{2}-k$, $\theta>0$ and $Z_k$ is given by
\begin{align*}
Z_k &=\prod_{p} \left( 1-\binom{k}{2} \frac{1}{p(p+1)} + \sum_{h=1}^{\lfloor \frac{k}{2} \rfloor} \sum_{\ell = 1}^{\binom{k}{2}} (-1)^{\ell}\binom{k}{2h} \frac{1}{p^{h+\ell-1}(p+1)} \right) \mathcal{I},
\end{align*}
where 
\begin{align*}
    \mathcal{I} &= \lim_{x \to \infty} \frac{1}{x^{\frac{k}{2}}(\log x)^{q-k}}\int_{\mathcal{A}(x)} dy_1\dots dy_q,
\end{align*}
with  and
\[
\mathcal{A}(x) = \left\{ (y_1,\dots,y_q) \in [1,\infty)^q \middle| \prod_{i=1}^q y_i^{l^{(i)}(\bm{e}_j)} \leq x \text{ for all } 1 \leq j \leq k \right\}.
\]
\end{thm}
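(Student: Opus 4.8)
The plan is to open the $k$-th power and exchange the order of summation, using that the Kronecker symbol $\chi_{8d}$ is completely multiplicative. Writing $m=n_1\cdots n_k$ this gives
\[
S_k(X,Y)=\sum_{n_1,\dots,n_k\le Y}\mu(n_1)\cdots\mu(n_k)\sideset{}{^*}{\sum}_{0<d\le X}\chi_{8d}(m).
\]
Since $\chi_{8d}(2)=0$, only odd $m$ contribute, so each $n_i$ may be taken odd; the factors $\mu(n_i)$ restrict the $n_i$ further to be square-free. Everything is thus reduced to understanding the quadratic character sum $\sideset{}{^*}{\sum}_{0<d\le X}\chi_{8d}(m)$ as $m$ runs over products of $k$ odd square-free integers.

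\textbf{Splitting off the main term.} I would separate the terms according to whether $m$ is a perfect square. If $m=\square$ then $\chi_{8d}(m)=\mathbf 1_{(d,m)=1}$, so the inner sum counts odd square-free $d\le X$ coprime to $m$, and a square-free sieve yields
\[
\sideset{}{^*}{\sum}_{0<d\le X}\chi_{8d}(m)=\frac{4}{\pi^{2}}X\prod_{p\mid m}\frac{p}{p+1}+O\!\left(X^{1/2}2^{\omega(m)}\right).
\]
Because each $n_i$ is square-free, $m=\square$ forces every prime to divide an even number of the $n_i$, whence $\sum_i\omega(n_i)$ is even and $\mu(n_1)\cdots\mu(n_k)=+1$ on this diagonal. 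Consequently the main term is $\frac{4}{\pi^{2}}X\,T(Y)$ with
\[
T(Y)=\sum_{\substack{n_1,\dots,n_k\le Y\ \text{odd, sq-free}\\ n_1\cdots n_k=\square}}\ \prod_{p\mid n_1\cdots n_k}\frac{p}{p+1},
\]
and it remains to find the asymptotics of $T(Y)$ and to control the non-square $m$.

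\textbf{The core: a multiple Dirichlet series and a Tauberian theorem.} This is the step I expect to be the main obstacle. I would attach to $T(Y)$ the $k$-variable Dirichlet series
\[
F(\bm s)=\sum_{\substack{n_1,\dots,n_k\ \text{odd, sq-free}\\ n_1\cdots n_k=\square}}\frac{1}{n_1^{s_1}\cdots n_k^{s_k}}\prod_{p\mid n_1\cdots n_k}\frac{p}{p+1}=\prod_{p\ \text{odd}}\left(1+\frac{p}{p+1}\sum_{h=1}^{\lfloor k/2\rfloor}\sum_{\lvert S\rvert=2h}p^{-\sum_{i\in S}s_i}\right),
\]
the inner sum running over $S\subseteq\{1,\dots,k\}$ of even cardinality. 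The dominant singular behaviour comes from the terms $\lvert S\rvert=2$, i.e. from the $q=\binom k2$ pairs $\{a,b\}$; comparing $F$ with $\prod_{1\le a<b\le k}\zeta(s_a+s_b)$ I would factor
\[
F(\bm s)=\prod_{1\le a<b\le k}\zeta(s_a+s_b)\cdot G(\bm s),
\]
where $G$ is an Euler product converging absolutely in a neighbourhood of $\bm s=(\tfrac12,\dots,\tfrac12)$ and $G(\tfrac12,\dots,\tfrac12)$ equals exactly the Euler product $\prod_p(1-\binom k2\frac{1}{p(p+1)}+\cdots)$ appearing in $Z_k$; this is where the factor $\frac{p}{p+1}$, expanded as $1-\frac1{p+1}$, produces the corrections $\binom k2\frac1{p(p+1)}$ and the higher $\binom{k}{2h}$ terms. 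The polar divisor of $F$ is the union of the $q$ hyperplanes $\{s_a+s_b=1\}$, which are precisely those cut out by the linear forms $l^{(i)}$. Feeding this into a multivariable Tauberian theorem (Perron inversion in the $k$ variables with kernels $Y^{s_i}/s_i$, followed by a contour shift governed by the geometry of this divisor) gives $T(Y)=Z_k\,Y^{k/2}Q(\log Y)+O(Y^{k/2-\theta})$: all $q$ hyperplanes pass through the single point $\bm s=(\tfrac12,\dots,\tfrac12)$, so $\sum_i\sigma_i=\tfrac k2$ there, giving the exponent $Y^{k/2}$, while the excess of these $q$ hyperplanes over the codimension $k$ of that point yields a polynomial $Q$ of degree $q-k$, monic once its leading coefficient is normalised by $\mathcal I=\lim_{x\to\infty}x^{-k/2}(\log x)^{k-q}\int_{\mathcal A(x)}dy_1\cdots dy_q$ --- the same count that makes $\int_{\mathcal A(x)}$ grow like $x^{k/2}(\log x)^{q-k}$. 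Multiplying by $\frac4{\pi^2}X$ produces the stated main term together with the error $O(XY^{k/2-\theta})$.

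\textbf{Error terms and the admissible range.} Finally I would bound the non-square contribution. For $m\ne\square$ the map $d\mapsto\chi_{8d}(m)$ is a non-principal character, so the P\'olya--Vinogradov-type estimate of Lemma \ref{lem:RudSou} applies; summing the resulting bound over the $n_i\le Y$ contributes $O(X^{1/2}Y^{5k/4}(\log Y)^k)$, the third error term, while the square-free sieve error together with the completion of the $d$-sum contributes $O(X^{1/2+\eps}Y^{k+\eps})$. Comparing the main term $XY^{k/2}$ with the P\'olya--Vinogradov error $X^{1/2}Y^{5k/4}$ shows that the former dominates precisely when $X^{1/2}\gg Y^{3k/4}$, i.e. when $Y\ll X^{2/3k-\eps}$, which is exactly the hypothesis of the theorem; in this range the remaining errors are also of lower order, completing the argument.
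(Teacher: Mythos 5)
Your proposal is correct and follows essentially the same route as the paper: expand the $k$-th power, apply the Rudnick--Soundararajan estimates to the $d$-sum (square versus non-square $n_1\cdots n_k$), reduce to the multivariable multiplicative function with Dirichlet series $\prod_{a<b}\zeta(s_a+s_b)\,G(\bm{s})$, and extract $Y^{k/2}Q(\log Y)$ with leading constant $G(\bm{1/2})\mathcal{I}$ from the geometry of the polar hyperplanes. The one step you sketch informally (the multivariable Perron inversion and contour shift) is exactly what the paper outsources to de la Bret\`eche's Tauberian theorem, which supplies the exact degree $\binom{k}{2}-k$ and the constant $\mathcal{I}$ you describe.
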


\begin{exa}
In the case $k=3$, we find that 
\begin{align*}
\mathcal{I} = \lim_{x \to \infty} \frac{1}{x^{\frac{3}{2}}}\int_{\substack{y_1,y_2,y_3 \geq 1 \\ y_1 y_2 \leq x,  y_1 y_3 \leq x, y_2 y_3 \leq x}} dy_1 dy_2 dy_3 = \lim_{x \to \infty} \frac{1}{x^{\frac{3}{2}}}(4x^{\frac{3} {2}}-6x+3x^{\frac{1}{2}}-1)=4.
\end{align*}
Therefore, we have
\[
Z_3 = 4\prod_{p} \left( 1-\frac{6}{p(p+1)}+\frac{3}{p^2(p+1)}-\frac{3}{p^3(p+1)} \right).
\]
\end{exa}

In \cite{GZ23}, they applied the techniques given by Soundararajan~\cite{So00} in his work on the nonvanishing of the central values of quadratic Dirichlet $L$-functions. They utilized Soundararajan's method and evaluated the contribution coming from the weight functions $\Phi,W$ by applying the result developed by Soundararajan and Young \cite{SY}. Assuming the truth of GRH, they can extend the region where the asymptotic formula for $S(X,Y;\Phi,W)$ holds.

On the other hand, we treat a weight removed character sum and give the asymptotic behavior of $S_k(X,Y)$ in restricted region $Y \ll X^{2/3k-\eps}$. To do this, we apply the estimates of quadratic character which was developed by Rudnick and Soundararajan~\cite{RS} (see Lemma \ref{lem:RudSou} below). It enables us to transform averages of quadratic twists of the M\"obius function into an arithmetic function of several variables. 

The study of arithmetic functions of several variables is applicable to the moments problem of single variable arithmetic functions. By applying analytical methods of multiple Dirichlet series, the moments problems are studied such as the M\"obius sums~\cite{BNP} and the Ramanujan sums \cite{CK}, \cite{GRam M}. 

In \cite{GRam M}, Goel and Ram Murty gave the asymptotic formula for the average of the moments of the Ramanujan sums by using the de la Bret\`{e}che multivariable Tauberian Theorem. Similar to their case, we can apply the result of de la Bret\`{e}che to estimate (\ref{S_k}). This will be shown in Section \ref{section5}, and properties of arithmetical functions of several variables will be described in the next section.

\section{Arithmetical functions of several variables}
For a natural number $k$, let $f : \mathbb{N}^k \to \mathbb{C}$ be an arithmetic function of $k$ variables. Then $f$ is said to be multiplicative if it is not identically zero and
\[
f(m_1n_1,\dots,m_kn_k) = f(m_1,\dots,m_k)f(n_1,\dots,n_k)
\]
holds for any $m_1,\dots,m_k, n_1,\dots,n_k \in \mathbb{N}$ such that $\gcd(m_1 \dots m_k, n_1 \dots n_k) = 1$. This is a generalization of the classical one-variable multiplicative functions which are not identically zero and $f(mn) = f(m)f(n)$ for $\gcd(m,n)=1$. 

If $f$ is multiplicative, then it is determined by the values $f(p^{v_1}, \dots, p^{v_k})$, where $p$ is prime and $v_1, \dots,v_k \in \mathbb{N}_0$. Thus, a formal Dirichlet series of several variables of $f$ admits an Euler product expansion:
\begin{align}
\label{Euler prod}
\sum_{n_1, \dots n_k =1}^\infty \frac{f(n_1,\dots n_k)}{n_1^{s_1}\dots n_k^{s_k}} =\prod_{p} \left( \sum_{v_1,\dots,v_k = 0}^\infty \frac{f(p^{v_1}, \dots, p^{v_k})}{p^{v_1s_1+\dots+v_ks_k}}\right).
\end{align}

In our context, we consider the function
\begin{align}
\label{f k-variable}
    f(n_1,\dots,n_k) = \mu(n_1)\dots\mu(n_k) \prod_{p|n_1\dots n_k} \frac{p}{p+1} \times \bm{1}_{n_1\dots n_k = \square}.
\end{align}
Here, we write $n=\square$ for $n$ being a square number. For $m_1,\dots,m_k, n_1,\dots,n_k \in \mathbb{N}$ such that $\gcd(m_1 \dots m_k, n_1 \dots n_k) = 1$, we see that
\[
\bm{1}_{m_1n_1\dots m_kn_k = \square} = \bm{1}_{m_1\dots m_k = \square} \bm{1}_{n_1\dots n_k = \square}
\]
since $m_1n_1\dots m_kn_k = \square$ if and only if $m_1\dots m_k = \square$ and $n_1\dots n_k = \square$. It is easy to check that the function $\prod_{p|n_1\dots n_k} \frac{p}{p+1}$ is also multiplicative. Therefore we find that the function $f(n_1,\dots,n_k)$ is multiplicative.

\begin{lem}
\label{lem:convolution}
    We have   
    \begin{align}
        \sum_{n_1, \dots, n_k =1}^\infty \frac{f(n_1,\dots n_k)}{n_1^{s_1}\dots n_k^{s_k}} &= \left(\prod_{\substack{I \subseteq \{1,\dots,k\} \\ \abs{I}=2 }}\zeta(s_I)\right) E(s_1,\dots,s_k), 
    \end{align}
    where $s_I:= s_{\ell_1}+\dots +s_{\ell_r}$ for any subset $I = \{ \ell_1,\dots,\ell_r \}$ of $\{1,\dots,k\}$, and $E(s_1,\dots,s_k)$ is a Dirichlet series absolutely convergent for $\re(s_j) > 1/4$ for $1\leq j \leq k$.
\end{lem}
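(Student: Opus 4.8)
The plan is to start from the Euler product (\ref{Euler prod}), available because $f$ is multiplicative, and to analyze the local factor at each prime $p$. Writing $u_j = p^{-s_j}$ and $\sigma_j = \re(s_j)$, the local factor is
\[
L_p = \sum_{v_1,\dots,v_k \geq 0} f(p^{v_1},\dots,p^{v_k})\, u_1^{v_1}\cdots u_k^{v_k}.
\]
Since $\mu(p^{v})=0$ for $v\geq 2$, only exponents $v_j\in\{0,1\}$ survive, so each term is squarefree in the $u_j$; the indicator $\bm 1_{n_1\cdots n_k=\square}$ forces $v_1+\dots+v_k$ to be even, the sign $\prod_j\mu(p^{v_j})=(-1)^{v_1+\dots+v_k}$ is then $+1$, and the factor $\prod_{p\mid n_1\cdots n_k}\frac{p}{p+1}$ equals $\frac{p}{p+1}$ as soon as some $v_j=1$. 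Using the standard identity $\sum_{|T|\text{ even}}\prod_{j\in T}u_j=\tfrac12\bigl(\prod_j(1+u_j)+\prod_j(1-u_j)\bigr)=\sum_{h\geq 0}e_{2h}(u_1,\dots,u_k)$ for the even elementary symmetric polynomials $e_{2h}$, I would record the closed form
\[
L_p = 1 + \frac{p}{p+1}\sum_{h\geq 1}e_{2h}(u_1,\dots,u_k).
\]

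Next I would isolate the singular part. The weight-two term is $e_2(u)=\sum_{|I|=2}u_iu_j=\sum_{|I|=2}p^{-s_I}$, which matches the local factors $\prod_{|I|=2}(1-p^{-s_I})^{-1}$ of $\prod_{|I|=2}\zeta(s_I)$. I therefore set
\[
E(s_1,\dots,s_k) = \Bigl(\prod_{|I|=2}\zeta(s_I)\Bigr)^{-1}\sum_{n_1,\dots,n_k\geq 1}\frac{f(n_1,\dots,n_k)}{n_1^{s_1}\cdots n_k^{s_k}},
\]
so that $E=\prod_p E_p$ with $E_p = L_p\prod_{|I|=2}(1-u_iu_j)$. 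This factorization is valid in any region of absolute convergence (e.g. $\re(s_j)$ large), and the goal becomes to show that $\prod_p E_p$ converges absolutely for $\re(s_j)>1/4$, which simultaneously furnishes the claimed factorization and the continuation of $E$.

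The heart of the matter is estimating $E_p-1$. Both $L_p$ and $\prod_{|I|=2}(1-u_iu_j)$ contain only even-degree monomials in the $u_j$, so $E_p$ does as well. Collecting by degree, the weight-two part of $E_p$ is $\bigl(\tfrac{p}{p+1}-1\bigr)e_2(u)=-\tfrac{1}{p+1}e_2(u)$: here the leading $e_2$ has cancelled, and the survivor is $\ll p^{-3/2}$ because $\tfrac1{p+1}\ll p^{-1}$ while $|u_iu_j|=p^{-(\sigma_i+\sigma_j)}\ll p^{-1/2}$. Every remaining monomial has total degree $\geq 4$; for fixed $k$ its coefficient is bounded (the coefficients of $L_p$ lie in $[0,1]$ and those of $\prod_{|I|=2}(1-u_iu_j)$ are bounded integers), and a monomial $\prod_j u_j^{a_j}$ with $\sum_j a_j\geq 4$ has modulus $p^{-\sum_j a_j\sigma_j}$. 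When $\sigma_j>1/4$ for all $j$ one has $\sum_j a_j\sigma_j\geq 4\min_j\sigma_j>1$, so every such monomial is $\ll p^{-1-\delta}$ for some $\delta>0$ depending only on $\min_j(\sigma_j-\tfrac14)$. As there are $O_k(1)$ monomials, this gives $|E_p-1|\ll_k p^{-1-\delta}$, whence $\sum_p|E_p-1|<\infty$ and $E=\prod_p E_p$ is an absolutely convergent Dirichlet series for $\re(s_j)>1/4$.

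I would expect the only real difficulty to be bookkeeping rather than any genuine obstruction: one must verify that the weight-two contribution cancels down to the harmless factor $-\tfrac{1}{p+1}e_2(u)$, and then identify that the binding constraint for convergence comes precisely from the weight-four monomials, which is exactly what forces the threshold $\re(s_j)>1/4$ and no larger.
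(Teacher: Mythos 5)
Your proposal is correct and follows essentially the same route as the paper: expand the Euler product using the support of $\mu$ and the parity constraint, factor out $\prod_{\abs{I}=2}\zeta(s_I)$, observe that the weight-two terms cancel down to $-\tfrac{1}{p+1}e_2$ (giving $O(p^{-3/2})$), and note that all remaining monomials have degree at least $4$, hence are $O(p^{-1-\delta})$ for $\re(s_j)>1/4$. The only cosmetic difference is that you phrase the local factor via even elementary symmetric polynomials where the paper indexes by subsets $I,J_1,\dots,J_\ell$.
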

\begin{proof}
Form (\ref{Euler prod}) and (\ref{f k-variable}), we have 
\begin{align}
\label{euler-prod}
    \sum_{n_1, \dots n_k =1}^\infty \frac{f(n_1,\dots n_k)}{n_1^{s_1}\dots n_k^{s_k}} 
    &= \prod_{p} \left( 1+\sum_{\substack{v_1,\dots,v_k \in \{ 0,1 \} \\ (v_1,\dots,v_k)\neq (0,\dots,0) \\ \abs{\bm{v}}:\text{even} \\ \abs{\bm{v}} \neq 0}} \left(\frac{p}{p+1}\right) \frac{1}{p^{v_1s_1+\dots+v_ks_k}} \right),
\end{align}
where $\abs{\bm{v}} = \# \{ j \in \{1,\dots,k\} \mid v_j=1 \text{ for } v_1,\dots,v_k \in \{ 0,1 \} \}$. Therefore we can factor the Euler product to
get
\begin{align*}
    & \prod_{p} \left( 1+\sum_{\substack{v_1,\dots,v_k \in \{ 0,1 \} \\ (v_1,\dots,v_k)\neq (0,\dots,0) \\ \abs{\bm{v}}:\text{even} \\ \abs{\bm{v}}\neq 0}} \left(\frac{p}{p+1}\right) \frac{1}{p^{v_1s_1+\dots+v_ks_k}} \right) \\
    &= \left( \prod_{\substack{I \subseteq \{1,\dots,k\} \\\abs{I}=2}} \zeta(s_I) \right) E(s_1,\dots,s_k),
\end{align*}
say. The term $E(s_1,\dots,s_k)$ can be calculated as
\begin{align}
    \label{E(s)}
    E(s_1,\dots,s_k) &= \left(\prod_{p} \left( 1+\sum_{\substack{I \subseteq \{1,\dots,k\} \\ \abs{I}:\text{even} \\ \abs{I} \neq 0}} \left(\frac{p}{p+1}\right) \frac{1}{p^{s_I}} \right)\right) \prod_{\substack{I \subseteq \{1,\dots,k\} \\\abs{I}=2}}\prod_{p} \left( 1-\frac{1}{p^{s_I}}\right) \\
    &=\prod_{p} \left( 1+\sum_{\substack{I \subseteq \{1,\dots,k\} \\\abs{I}=2}} \left( \left(\frac{p}{p+1}\right) -1 \right)\frac{1}{p^{s_I}} \right. \nonumber \\ 
    &\qquad\qquad \left.+ \sum_{\ell = 1}^{\binom{k}{2}} (-1)^{\ell}\sum_{\substack{I, J_1,\dots,J_{\ell} \subseteq \{1,\dots,k\} \\ \abs{I}:\text{even}, \ \abs{I} \neq 0 \\ \abs{J_1},\dots,\abs{J_{\ell}}=2}} \left(\frac{p}{p+1}\right) \frac{1}{p^{s_I+s_{J_1}+\dots+s_{J_\ell}}} \right) \nonumber \\
    &=\prod_{p} \left( 1-\sum_{\substack{I \subseteq \{1,\dots,k\} \\\abs{I}=2}} \frac{1}{p+1}\frac{1}{p^{s_I}} 
    \right. \nonumber \\
    &\qquad\qquad \left.+ \sum_{\ell = 1}^{\binom{k}{2}} (-1)^{\ell}\sum_{\substack{I, J_1,\dots,J_{\ell} \subseteq \{1,\dots,k\} \\ \abs{I}:\text{even}, \ \abs{I} \neq 0 \\ \abs{J_1},\dots,\abs{J_{\ell}}=2}} \left(\frac{p}{p+1}\right) \frac{1}{p^{s_I+s_{J_1}+\dots+s_{J_\ell}}} \right) \nonumber.
\end{align}
For any even $s_I,s_J$ such that $\abs{I},\abs{J}\neq 0$, 
\[
\re(s_I)+\re(s_J)>1
\]
holds for $\re(s_j)>1/4$. Therefore, the above Euler product is convergent absolutely for $\re(s_j)>1/4$. So we obtain the desired result.
\end{proof} 

\section{Estimation}
We treat the relevant character sum individually so that we apply the following estimates of quadratic characters given by Rudnick and Soundararajan~\cite{RS}, which are the major ingredients of this article.

\begin{lem}
\label{lem:RudSou}
    Let $n$ be an odd integer, and let $z \geq 3$ be a real number. If $n$ is not a perfect square then
    \[
    \sum_{d \leq z}\mu(2d)^2\left(\frac{8d}{n}\right) \ll z^{\frac{1}{2}}n^{\frac{1}{4}}\log (2n),
    \]
    while if $n$ is a perfect square then
    \[
    \sum_{d \leq z}\mu(2d)^2\left(\frac{8d}{n}\right) =\frac{z}{\zeta(2)} \prod_{p|2n} \frac{p}{p+1}+O(z^{\frac{1}{2}+\eps}n^\eps).
    \]
\end{lem}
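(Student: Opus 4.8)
The plan is to treat the two cases by first reducing the weight $\mu(2d)^2$ to the indicator of odd square-free $d$, and then exploiting the fact that, for fixed odd $n$, the Jacobi symbol $\left(\frac{\cdot}{n}\right)$ appearing in the numerator is a Dirichlet character modulo $n$, which is principal precisely when $n$ is a perfect square. Since $\mu(2d)^2=\mu(d)^2\mathbf{1}_{d\text{ odd}}$, we have
\[
\sum_{d\le z}\mu(2d)^2\left(\frac{8d}{n}\right)=\sum_{\substack{d\le z\\ d\text{ odd}}}\mu(d)^2\left(\frac{8d}{n}\right),
\]
and inserting $\mu(d)^2=\sum_{a^2\mid d}\mu(a)$ and writing $d=a^2m$ (so $a,m$ are odd and $\left(\frac{8a^2m}{n}\right)=\mathbf{1}_{\gcd(a,n)=1}\left(\frac{8m}{n}\right)$) gives
\[
\sum_{d\le z}\mu(2d)^2\left(\frac{8d}{n}\right)=\sum_{\substack{a\le\sqrt z,\ a\text{ odd}\\ \gcd(a,n)=1}}\mu(a)\sum_{\substack{m\le z/a^2\\ m\text{ odd}}}\left(\frac{8m}{n}\right).
\]

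For the square case the symbol $\left(\frac{8m}{n}\right)$ collapses to $\mathbf{1}_{\gcd(m,n)=1}$ (since $n$ is odd, $\gcd(8m,n)=\gcd(m,n)$), so the whole expression counts odd square-free $d\le z$ with $\gcd(d,2n)=1$. I would evaluate this by the standard device $\#\{b\le Y:\gcd(b,2n)=1\}=\frac{\phi(2n)}{2n}Y+O(2^{\omega(2n)})$, which yields
\[
\sum_{d\le z}\mu(2d)^2\left(\frac{8d}{n}\right)=\frac{\phi(2n)}{2n}z\sum_{\substack{a\le\sqrt z\\ \gcd(a,2n)=1}}\frac{\mu(a)}{a^2}+O\left(2^{\omega(2n)}\sqrt z\right).
\]
Completing the sum over $a$ to infinity (with error $O(\sqrt z)$) turns the leading constant into $\frac{\phi(2n)}{2n}\prod_{p\nmid 2n}(1-p^{-2})=\frac{1}{\zeta(2)}\prod_{p\mid 2n}\frac{p}{p+1}$, and since $2^{\omega(2n)}\ll n^\eps$ the total error is $O(z^{1/2}n^\eps)$, as claimed.

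For the non-square case $\left(\frac{\cdot}{n}\right)$ is a non-principal character modulo $n$, so the inner sum genuinely oscillates. Writing $\left(\frac{8m}{n}\right)=\left(\frac{8}{n}\right)\left(\frac{m}{n}\right)$ and removing the parity constraint via $\sum_{m\le M,\ m\text{ odd}}=\sum_{m\le M}-\sum_{m\le M,\ 2\mid m}$, each partial sum of $\left(\frac{\cdot}{n}\right)$ is $\ll\sqrt n\log n$ by the Pólya–Vinogradov inequality, so the inner sum is $\ll\sqrt n\log n$ uniformly in $M=z/a^2$. Applying this for $a\le Z$ and bounding the remaining range $Z<a\le\sqrt z$ trivially by $\sum_{a>Z}z/a^2\ll z/Z$ gives
\[
\sum_{d\le z}\mu(2d)^2\left(\frac{8d}{n}\right)\ll Z\sqrt n\log n+\frac{z}{Z}.
\]
Choosing $Z=(z/(\sqrt n\log n))^{1/2}$ balances the two terms and produces $\ll z^{1/2}n^{1/4}(\log n)^{1/2}\ll z^{1/2}n^{1/4}\log(2n)$; in the range $z\ll\sqrt n$ the trivial bound $\ll z$ already gives the claim.

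The main obstacle is the non-square case: simply summing the Pólya–Vinogradov bound over all $a\le\sqrt z$ without truncation only yields the weaker estimate $z^{1/2}n^{1/2}\log n$, and it is precisely the truncation of the square-free detection at the optimally chosen level $Z$—trading the oscillatory savings $\sqrt n$ against the trivial tail $z/Z$—that lowers the conductor exponent from $1/2$ to $1/4$. The remaining points are routine: verifying that $\left(\frac{\cdot}{n}\right)$ is non-principal exactly when $n$ is not a perfect square, and checking that the harmless factor $\left(\frac{8}{n}\right)=\pm1$ together with the parity restriction do not affect the Pólya–Vinogradov bound.
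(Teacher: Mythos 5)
The paper does not prove this lemma; it is quoted verbatim from Rudnick and Soundararajan \cite{RS} (their Lemma 1), and your argument is essentially the proof given there: detect squarefreeness via $\mu(d)^2=\sum_{a^2\mid d}\mu(a)$, reduce the principal-character (square $n$) case to counting integers coprime to $2n$, and in the non-square case balance the P\'olya--Vinogradov bound on the inner sum against the trivial bound for large $a$. Your computation of the constant $\frac{\phi(2n)}{2n}\prod_{p\nmid 2n}(1-p^{-2})=\frac{1}{\zeta(2)}\prod_{p\mid 2n}\frac{p}{p+1}$ and the optimization $Z=(z/(\sqrt{n}\log n))^{1/2}$ are both correct, so the proposal is a sound reconstruction of the cited result.
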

By using the above estimates, we obtain
\begin{align*}
    S_k(X,Y) &= \sum_{\substack{0<d<X \\ (2,d)=1}} \mu(2d)^2 \left( \sum_{n\leq Y} \chi_{8d}(n) \mu(n)\right)^k \\
    &= \sum_{\substack{0<d<X \\ (2,d)=1}} \mu(2d)^2 \sum_{n_1\leq Y} \dots \sum_{n_k\leq Y} \chi_{8d}(n_1) \mu(n_1) \dots \chi_{8d}(n_k) \mu(n_k) \\
    &= \sum_{n_1\leq Y} \dots \sum_{n_k\leq Y} \mu(n_1) \dots \mu(n_k)\sum_{\substack{0<d<X \\ (2,d)=1}} \mu(2d)^2 \chi_{8d}(n_1 \dots n_k).
\end{align*}
Then we apply Lemma \ref{lem:RudSou} and factor out $p=2$ to find that 
\begin{align}
\begin{split}
\label{S_k with f}
    S_k(X,Y) &=\frac{X}{\zeta(2)} \sum_{\substack{n_1\leq Y, \dots, n_k\leq Y \\ n_1\dots n_k=\square}} \mu(n_1) \dots \mu(n_k) \prod_{p|2n_1\dots n_k} \frac{p}{p+1} \\
    &\qquad +O\left(X^{\frac{1}{2}+\eps}Y^{k+\eps}\right)+O\left(X^\frac{1}{2}Y^{\frac{5}{4}k}(\log Y)^k\right) \\
    &=\frac{4}{\pi^2}X \sum_{n_1\leq Y, \dots, n_k\leq Y } f(n_1, \dots,n_k) +O\left(X^{\frac{1}{2}+\eps}Y^{k+\eps}\right)+O\left(X^\frac{1}{2}Y^{\frac{5}{4}k}(\log Y)^k\right).
\end{split}
\end{align}

\section{Proof of Theorem \ref{thm:1}}
We separate the argument into two cases, that is, $k=2$ and $k\geq 3$ because we can obtain an elaborated error term and the exact coefficient of the main term in the case $k=2$. In this case we prove it by Perron's formula.
\begin{proof}[Proof of Theorem \ref{thm:1} in the case $k=2$]
We start from (\ref{S_k with f}) with $k=2$ to find that 
\begin{align*}
    S_2(X,Y) &=\frac{4}{\pi^2}X \sum_{n_1\leq Y, n_2\leq Y } f(n_1,n_2) +O\left(X^{\frac{1}{2}+\eps}Y^{2+\eps}\right)+O\left(X^\frac{1}{2}Y^{\frac{5}{2}}(\log Y)^2\right).
\end{align*}
Since $f(n_1, n_2)$ is supported on square-free integers with $n_1n_2=\square$, the non-diagonal terms vanish. Hence we see that 
\begin{align*}
    \sum_{n_1\leq Y, n_2\leq Y } f(n_1,n_2) &= \sum_{n\leq Y} \mu(n)^2 \prod_{p|n^2}\frac{p}{p+1}.
\end{align*}
Set $g(n):= \mu(n)^2 \prod_{p|n^2}\frac{p}{p+1}$ and consider its generating Dirichlet series
\[
G(s)=\sum_{n=1}^\infty \frac{g(n)}{n^s}.
\]
Then by the Euler product formula, we have
\begin{align*}
    G(s) &= \prod_{p} \left(1+\frac{p}{p+1}\frac{1}{p^s}\right) \\
    &= \zeta(s)\prod_{p} \left(1-\frac{1}{p+1}\frac{1}{p^s}-\frac{p}{p+1}\frac{1}{p^{2s}}\right) \\
    &= \zeta(s)E(s),
\end{align*}
say. We see that the Dirichlet series $E(s)$ converges absolutely for $\si>1/2$.

By Perron's formula~\cite[Theorem 5.2 and Corollary 5.3]{MV}, we have
\begin{align*}
    \sideset{}{'}{\sum}_{n \leq Y} g(n) = \frac{1}{2\pi i} \int_{c-iT}^{c+iT} G(s)\frac{Y^s}{s}ds+R,
\end{align*}
with 
\begin{align*}
    R \ll \sum_{\substack{Y/2\leq n\leq 2Y \\ n \neq Y}} \abs{g(n)} \min \left\{1, \frac{Y}{T\abs{Y-n}}\right\}+\frac{(4Y)^c}{T}\sum_{n=1}^\infty \frac{\abs{g(n)}}{n^c},
\end{align*}
where $c=1+1/\log Y$ and $T$ is a parameter which will be chosen later. Here, $\sum^\prime$ indicates that if $Y$ is an integer, then the last term is to be halved. Since $\abs{g(n)} \leq \prod_{p|n^2}\frac{p}{p+1} \leq 1$, we see that
\begin{align}
\label{error-R}
    R \ll 1+\frac{Y\log Y}{T}.
\end{align}

We move the line of integration to $\delta=1/2+1/\log Y$, to have 
\begin{align*}
    \frac{1}{2\pi i} \int_{c-iT}^{c+iT} G(s)\frac{Y^s}{s}ds &= E(1) \\
    &\quad +\frac{1}{2\pi i}\left(\int_{\delta+iT}^{c+iT}+\int_{\delta-iT}^{\delta+iT}-\int_{\delta-iT}^{c-iT}\right)\zeta(s)E(s)\frac{Y^s}{s}ds.
\end{align*}

The Hardy-Littlewood bound of the Riemann zeta-function and the Phragm\'en–Lindel\"of principle imply that
\[
\zeta(\si+it) \ll (1+\abs{t})^{-\frac{\si}{3}+\frac{1}{3}} \quad (1/2 \leq \si < 1).
\]
Using this bound, we have
\begin{align}
\begin{split}
\label{error-H}
       \int_{\delta\pm iT}^{c\pm iT} \abs{\zeta(s)E(s)}\abs{\frac{Y^s}{s}}\abs{ds} &\ll \frac{1}{T^{\frac{5}{6}+\frac{1}{3\log Y}}} \int_{\delta}^{c} Y^\si d\si \\
    &\ll \frac{Y}{T^{\frac{5}{6}}} \exp \left(-\frac{\log T}{3\log Y}\right),
\end{split}
\end{align}
and 
\begin{align}
\begin{split}
\label{error-V}
    \int_{\delta- iT}^{\delta+ iT} \abs{\zeta(s)E(s)}\abs{\frac{Y^s}{s}}\abs{ds} &\ll Y^\frac{1}{2} \int_{1}^T t^{-\frac{2}{3}-\frac{\delta}{3}} dt \\
    &\ll Y^\frac{1}{2} T^{\frac{1}{6}}\exp \left(-\frac{\log T}{3\log Y}\right).
\end{split}
\end{align}

We choose $T=Y^\frac{1}{2}$. Then, we can evaluate that the contributions from (\ref{error-H}) and (\ref{error-V}) are $Y^\frac{7}{12}$. Moreover from (\ref{error-R}) we can evaluate $R \ll Y^\frac{1}{2}\log Y$. 

Lastly, we see that
\[
E(1)=\prod_p \left( 1-\frac{2}{p(p+1)} \right) =Z_2.
\]
Therefore we complete the proof of Theorem \ref{thm:1}.
 \end{proof}
 
\section{Proof of Theorem \ref{thm:2}}\label{section5}
Our main tool to prove Theorem \ref{thm:2} is the de la Bret\`{e}che Tauberian theorem~\cite{B} for non-negative arithmetical functions of several variables. The results of de la Bret\`{e}che were used by Essouabri, Salinas Zavala and T\'oth~\cite{EST22} to give the asymptotic behavior of the average of a wide class of multivariable arithmetic functions. 

We denote the canonical basis of $\mathbb{C}^k$ by $\{\bm{e}_j\}_{j=1}^k$ and its dual basis by $\{\bm{e}_j^*\}_{j=1}^k$.

\begin{thm}[{\cite[Th\'{e}or\`{e}me 1]{B}}]
\label{thm:breteche1}
    Let $f: \mathbb{N}^k \to \mathbb{R}$ be a non-negative function and $F$ the associated Dirichlet series of $
f$ defined by
\[
F(\bm{s}) = F(s_1,\dots,s_k) = \sum_{n_1,\dots,n_k}^\infty \frac{f(n_1,\dots,n_k)}{n^{s_1}\dots n_k^{s_k}}.
\]
Denote by $\mathcal{LR}_k^+(\mathbb{C})$ the set of non-negative $\mathbb{C}$ linear forms from $\mathbb{C}^k$ to $\mathbb{C}$ on $(\mathbb{R}_{\geq 0})^k$. Moreover, assume that there exists $(c_1, \dots,c_k) \in (\mathbb{R}_{\geq 0})^k$ such that:
\begin{enumerate}[(1)]
\item For $\bm{s} \in \mathbb{C}^k$, $F(s_1,\dots,s_k)$ is absolutely convergent for $\re(s_i) > c_i$ for all $1 \leq i \leq k$.

\item There exist a finite family $\mathcal{L} = (l^{(i)})_{1\leq i \leq q}$ of non-zero elements of $\mathcal{LR}_k^+(\mathbb{C})$, a finite family $(h^{
(i)})_{1\leq i \leq q^\prime}$ of elements of $\mathcal{LR}_k^+(\mathbb{C})$ and $\delta_1, \delta_3>0$ such that the function $H$ defined by
\[
H(\bm{s}) = F(\bm{s} + \bm{c}) \prod_{i=1}^q l^{(i)}(\bm{s})
\]
has a holomorphic continuation to the domain
\begin{align*}
D(\delta_1, \delta_3) 
&= \left\{ \bm{s} \in \mathbb{C}^k \mid \re\left(l^{(i)}(\bm{s})\right) > -\delta_1 \text{ for all } \leq i \leq q \right. \\
& \qquad\qquad\qquad \left. \text{ and } \re\left(h^{(i)}(\bm{s})\right)> -\delta_3 \text{ for all } 1\leq i \leq q^\prime \right\}.
\end{align*}
\item There exist $\delta_2>0$ such that for $\eps,\eps^\prime>0$ we have uniformly in $\bm{s} \in D(\delta_1 -\eps,\delta_3-\eps^\prime)$ 
\[
H(\bm{s}) \ll \prod_{i=1}^q \left(\abs{\im\left(l^{(i)}(\bm{s})\right)}+1 \right)^{1-\delta_2 \min \left\{0, \re\left( l^{(i)}(\bm{s})\right)\right\}}(1+(\abs{\im(s_1)}+\dots+\abs{\im(s_k)})^\eps).
\]
\end{enumerate}
Set $J = J(\mathbb{C}) = \{j \in \{1, \dots, k\} \mid c_j = 0\}$. Denote $w$ to be the cardinality of $J$ and by $j_1 <\dots<j_w$ its elements in increasing order. Define the $w$ linear forms $l^{(q+i)}$ $(1 \leq i \leq w)$ by $l^{(q+i)}(\bm{s}) = {e^{*}}_{j_i}(\bm{s}) = s_{j_i}$.

Then, for any $\bm{\beta} = (\beta_1,\dots, \beta_k) \in (0,\infty)^k$, there exist a polynomial $Q_{\bm{\beta}} \in \mathbb{R}[X]$ of degree at most $q + w - Rank \left(l^{(1)}, \dots, l^{(q+w)}\right)$ and $\theta > 0$ such that as $x \to \infty$
\[
\sum_{n_1\leq x^{\beta_1}}\dots\sum_{n_k \leq x^{\beta_k}} f(n_1,\dots,n_k) = x^{\langle \bm{c}, \bm{\beta} \rangle} Q_{\bm{\beta}}(\log x) +O\left( x^{\langle \bm{c}, \bm{\beta} \rangle - \theta}\right).
\]
Here, $\langle \cdot, \cdot \rangle$ denotes the usual dot product in $\mathbb{R}^k$.
\end{thm}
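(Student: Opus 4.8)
The plan is to prove Theorem \ref{thm:breteche1} by multivariable Mellin--Perron inversion followed by a multidimensional contour shift, with the main term emerging as a residue at the origin of the series translated by $\bm{c}$. The non-negativity of $f$ plays the usual Tauberian role: it lets us control the smoothing errors that arise when the sharp cutoffs $n_j \leq x^{\beta_j}$ are replaced by a weight smooth enough to force convergence of the iterated integral.

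First I would write the partial sum as an iterated contour integral. Using a smoothed Perron formula in each of the $k$ variables, with abscissae $\kappa_j > c_j$ chosen inside the region of absolute convergence furnished by hypothesis (1), one obtains
\[
\sum_{n_1 \leq x^{\beta_1}} \cdots \sum_{n_k \leq x^{\beta_k}} f(n_1,\dots,n_k) = \frac{1}{(2\pi i)^k} \int_{(\kappa_1)} \cdots \int_{(\kappa_k)} F(\bm{s}) \prod_{j=1}^k \frac{x^{\beta_j s_j}}{s_j}\, d\bm{s} + R,
\]
where $R$ collects the error from removing the smoothing weight. Translating by $\bm{c}$, i.e. substituting $s_j \mapsto s_j + c_j$, pulls out the factor $x^{\langle \bm{c}, \bm{\beta}\rangle}$ and turns the integrand into $H(\bm{s})\big/\prod_{i=1}^{q} l^{(i)}(\bm{s})$ times the entire factor $\prod_j x^{\beta_j s_j}$ and the remaining poles $1/(s_j+c_j)$; by (2) the function $H$ is holomorphic on $D(\delta_1,\delta_3)$.

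Next I would shift each contour leftward across $\bm{s}=\bm{0}$. By hypothesis (2) the only singularities encountered are the hyperplanes $l^{(i)}(\bm{s})=0$ for $1 \leq i \leq q$, together with the coordinate hyperplanes $s_{j_i}=0$ ($j_i \in J$) coming from the factors $1/(s_j+c_j)=1/s_j$ when $c_j=0$; these are exactly the forms $l^{(q+1)},\dots,l^{(q+w)}$, and all of them pass through the origin. The dominant contribution is therefore the multidimensional residue at $\bm{s}=\bm{0}$. Since the numerator $H(\bm{s})\prod_j x^{\beta_j s_j}$ is holomorphic there and the singular factor is a product of the linear forms $l^{(1)},\dots,l^{(q+w)}$, the residue is a homogeneous computation: after resolving the $\mathrm{Rank}(l^{(1)},\dots,l^{(q+w)})$ independent directions, the remaining $q + w - \mathrm{Rank}(l^{(1)},\dots,l^{(q+w)})$ forms produce logarithmic integrations, each contributing a factor $\log x$. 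This yields $x^{\langle \bm{c},\bm{\beta}\rangle}Q_{\bm{\beta}}(\log x)$ with $\deg Q_{\bm{\beta}}$ at most that rank defect, and $Q_{\bm{\beta}}\in\R[X]$ because $f$, hence $H(\bm{0})$, is real. The error term is then obtained by pushing the leftover contours to $\re(l^{(i)}(\bm{s})) = -\delta_1+\eps$ and $\re(h^{(i)}(\bm{s}))=-\delta_3+\eps'$ inside $D(\delta_1-\eps,\delta_3-\eps')$, where the polynomial bound (3) controls the integrand while the now-negative real exponents in $\prod_j x^{\beta_j s_j}$ give a saving $x^{-\theta}$ for some $\theta>0$ depending on $\delta_1,\delta_2$ and $\bm{\beta}$; combined with $R$ this produces $O\big(x^{\langle\bm{c},\bm{\beta}\rangle-\theta}\big)$.

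The main obstacle is the multidimensional residue analysis itself: because the $l^{(i)}$ need not be coordinate hyperplanes, the contours cannot be shifted one variable at a time in a naive fashion, and one must track how the poles interact when several forms vanish simultaneously at $\bm{0}$. The workable route is an induction on $k$, peeling off one variable via a one-variable Tauberian argument while bookkeeping the polynomial degree through the rank, and choosing the order of the contour shifts so that the growth hypothesis (3) stays applicable at every stage. Securing the uniform convergence of all iterated integrals and the degree bound $q+w-\mathrm{Rank}(l^{(1)},\dots,l^{(q+w)})$ is precisely where the full strength of the polynomial estimate in (3) is needed.
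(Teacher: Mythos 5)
This statement is not proved in the paper at all: it is Th\'eor\`eme 1 of de la Bret\`eche \cite{B}, quoted as a black box, so there is no internal argument to compare your proposal against. Judged on its own terms, your sketch identifies the right general strategy --- an iterated (smoothed) Perron representation, translation by $\bm{c}$, and a contour shift picking up the singular locus $\prod_i l^{(i)}(\bm{s})=0$ --- and this is indeed the spirit of de la Bret\`eche's argument. But the two steps you flag as ``the main obstacle'' are precisely where the content of the theorem lives, and your proposal does not resolve them.

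Concretely: (i) when $q+w>k$ the hyperplanes $l^{(i)}(\bm{s})=0$ through the origin do not form a normal crossings divisor, so there is no single ``multidimensional residue at $\bm{s}=\bm{0}$'' to take, and the degree bound $q+w-\mathrm{Rank}(l^{(1)},\dots,l^{(q+w)})$ is exactly the quantity that a naive residue count fails to produce; de la Bret\`eche handles this by an induction on the number of linear forms (reducing the pole order via partial-fraction-type identities among the $l^{(i)}$), not by peeling off one variable of $\bm{s}$ at a time, and asserting ``induction on $k$'' does not supply that mechanism. (ii) Hypothesis (3) only bounds $H$ by $\prod_i(\abs{\im(l^{(i)}(\bm{s}))}+1)^{1+o(1)}$ on and to the left of the critical contours, so the iterated integrals against the kernel $\prod_j x^{\beta_j s_j}/s_j$ are not absolutely convergent; one must work with a higher-order smoothing kernel throughout and then remove it. Removing the smoothing with an error that is $O(x^{\langle\bm{c},\bm{\beta}\rangle-\theta})$ (a power saving, not merely $o(x^{\langle\bm{c},\bm{\beta}\rangle})$) requires a quantitative upper bound for sums of $f$ over boxes with sides perturbed by $x^{-\eta}$, which is an additional Tauberian input you invoke via ``non-negativity'' but do not actually derive. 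As written, the proposal is a correct road map with the two hardest bridges left unbuilt.
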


The next theorem gives a determination of the precise degree and the leading coefficient of the polynomial $Q_{\bm{\beta}}$ appearing in the previous theorem. We denote by $\mathbb{R}_*^+$ the set of strictly positive real numbers, the notation $con^*(\{l^{(1)},\dots,l^{(q)}\})$ means $\mathbb{R}_*^+ l^{(1)}+\dots+\mathbb{R}_*^+ l^{(q)}$.

\begin{thm}[{\cite[(iii) and (iv) of Th\'{e}or\`{e}me 2]{B}}]
\label{thm:breteche2}
Let $f : \mathbb{N}^k \to \mathbb{R}$ be a non-negative function satisfying the assumptions of Theorem \ref{thm:breteche1}. Let $\bm{\beta} = (\beta_1,\dots,\beta_k) \in (0,\infty)^k$. Set $\mathcal{B} = \sum_{i=1}^k\beta_i \bm{e}_i^* \in \mathcal{LR}_k^+(\mathbb{C})$.
\begin{enumerate}[(1)]
\item If the Dirichlet series $F$ satisfies the additional two assumptions:
\begin{enumerate}[(C1)]
\item\label{C1}  There exists a function $G$ such that $H(\bm{s}) = G\left(l^{(1)}(\bm{s}),\dots,l^{(q+w)}(\bm{s}) \right)$.
\item\label{C2} $\mathcal{B} \in Vect \left(\{l^{(i)} \mid i = 1,\dots q + w\}\right)$ and there is no subfamily $\mathcal{L}^\prime$ of $\mathcal{L}_0 :=\left( l^{(i)}\right)_{1\leq i \leq q+w}$ such that $\mathcal{L}^\prime \neq \mathcal{L}_0$, $\mathcal{B} \in Vect(\mathcal{L}^\prime)$ and $\#\mathcal{L}^\prime -Rank(\mathcal{L}^\prime) = \#\mathcal{L}_0 - Rank(\mathcal{L}_0)$.
\end{enumerate}
Then, the polynomial $Q_{\bm{\beta}}$ satisfies the relation
\[
Q_{\bm{\beta}}(\log x) = H(\bm{0})x^{-\langle \bm{c}, \bm{\beta} \rangle}\mathcal{I}_{\bm{\beta}}(x)+O((\log x)^{\rho-1}),
\]
where $\rho := q+w-Rank(l^{(1)},\dots, l^{(q+w)})$ and 
\[
\mathcal{I}_{\bm{\beta}}(x) := \int_{\mathcal{A}_{\bm{\beta}}(x)} \frac{dy_1\dots dy_q}{\prod_{i=1}^q y_i^{1-l^{(i)}(\bm{c})}},
\]
with
\[
\mathcal{A}_{\bm{\beta}(x)} := \left\{ \bm{y} \in [1,\infty)^q \middle| \prod_{i=1}^q y_i^{l^{(i)}(\bm{e}_j)} \leq x^{\beta_j} \text{ for all } 1 \leq j \leq k \right\}.
\]
\item If $Rank\left(l^{(1)},\dots,l^{(q+w)} \right)=n, H(\bm{0})\neq 0$ and $\mathcal{B} \in con^* \left(\{ l^{(1)},\dots,l^{(q+w)} \} \right)$, then $\operatorname{deg}(Q_{\bm{\beta}}) =  q + w - n$.
\end{enumerate}
\end{thm}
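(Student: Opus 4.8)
The plan is to prove both parts by the multidimensional Mellin--Perron/residue method that already underlies Theorem \ref{thm:breteche1}, now tracking the precise leading term rather than only the order of magnitude. First I would represent the summatory function as an iterated Perron integral,
\[
\sum_{n_1 \le x^{\beta_1}} \cdots \sum_{n_k \le x^{\beta_k}} f(n_1,\dots,n_k) = \frac{1}{(2\pi i)^k} \int_{(\kappa_1)} \cdots \int_{(\kappa_k)} F(\bm{s}) \prod_{j=1}^k \frac{x^{\beta_j s_j}}{s_j}\, ds_1 \cdots ds_k,
\]
with $\kappa_j > c_j$, then translate by $\bm{c}$ and insert the factorization $F(\bm{s}+\bm{c}) = H(\bm{s})\big/\prod_{i=1}^q l^{(i)}(\bm{s})$ from hypothesis (2). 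After the shift, the factors $1/s_j$ with $c_j>0$ are holomorphic near $\bm{s}=\bm{0}$, while those with $c_j=0$ contribute exactly the extra forms $l^{(q+i)}(\bm{s}) = s_{j_i}$; thus the integrand acquires the singular denominator $\prod_{i=1}^{q+w} l^{(i)}(\bm{s})$ over the whole family $\mathcal{L}_0$, multiplied by the holomorphic $H(\bm{s})$ and by $x^{\langle \bm{c}, \bm{\beta}\rangle} x^{\langle \bm{s}, \bm{\beta}\rangle}$.

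The asymptotics are then extracted by pushing the contour past $\bm{s}=\bm{0}$ and collecting iterated residues along the hyperplane arrangement $\{l^{(i)}(\bm{s})=0\}_{i\le q+w}$, the growth bound (3) justifying the contour moves in the imaginary directions. Since $x^{\langle \bm{s}, \bm{\beta}\rangle} = \exp(\langle \bm{s}, \bm{\beta}\rangle \log x)$, each residue taken along an intersection of hyperplanes produces a power of $\log x$, and a count of intersections shows the maximal accumulated power is $q+w-Rank(\mathcal{L}_0) = \rho$, recovering the degree bound of Theorem \ref{thm:breteche1} and isolating the top-order contribution as the residue supported at the common intersection point $\bm{s}=\bm{0}$. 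Hypothesis (C1), which says $H(\bm{s}) = G(l^{(1)}(\bm{s}),\dots,l^{(q+w)}(\bm{s}))$, is what makes this extremal residue computable in closed form: because $H$ enters only through the forms, the residue decouples into $H(\bm{0})$ times a purely geometric integral in the variables $l^{(i)}(\bm{s})$. Carrying out the inverse Mellin inversion I expect to recognise that geometric integral precisely as $x^{\langle \bm{c}, \bm{\beta}\rangle}\mathcal{I}_{\bm{\beta}}(x)$, the weight $\prod_i y_i^{l^{(i)}(\bm{c})-1}$ in $\mathcal{I}_{\bm{\beta}}$ reflecting the translation by $\bm{c}$.

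Condition (C2) is then used to control the remainder. It asserts that $\mathcal{L}_0$ is the unique minimal subfamily that both spans $\mathcal{B}$ and attains the extremal excess $\#\mathcal{L}_0 - Rank(\mathcal{L}_0)$; geometrically this forces every proper subfamily either to miss $\mathcal{B}$ in its span or to have strictly smaller nullity, so the residues along all lower strata feed only into powers of $\log x$ of order at most $\rho-1$. Collecting these gives $Q_{\bm{\beta}}(\log x) = H(\bm{0}) x^{-\langle \bm{c}, \bm{\beta}\rangle} \mathcal{I}_{\bm{\beta}}(x) + O((\log x)^{\rho-1})$, which is statement (1). For (2) I would determine the growth of $\mathcal{I}_{\bm{\beta}}(x)$ directly: passing to logarithmic coordinates $v_i=\log y_i$ turns the weighted integral into $\int e^{\sum_i v_i l^{(i)}(\bm{c})}\,dv$ over the polytope $\{v\in[0,\infty)^q : \sum_i v_i l^{(i)}(\bm{e}_j) \le \beta_j \log x\}$. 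Since the exponent is a non-negative combination of the constraint functionals it is bounded by $\langle \bm{c}, \bm{\beta}\rangle \log x$, attained on the face where the constraints are tight, and a Laplace/vertex analysis of the resulting linear program gives $\mathcal{I}_{\bm{\beta}}(x) \asymp x^{\langle \bm{c}, \bm{\beta}\rangle}(\log x)^{q+w-n}$, the exponent being the excess of the arrangement. The hypothesis $\mathcal{B}\in con^*(\{l^{(1)},\dots,l^{(q+w)}\})$ guarantees the relevant face is non-degenerate so this power is genuinely present with a positive constant, and together with $H(\bm{0})\neq 0$ this shows the leading coefficient of $Q_{\bm{\beta}}$ does not vanish, whence $\deg Q_{\bm{\beta}} = q+w-n$.

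The hardest part, I expect, will be the rigorous bookkeeping of the multidimensional contour shift: in several complex variables the iterated residues depend on the order in which the hyperplanes are crossed, so one must organise the argument as an induction (on $k$, or on the number of forms in $\mathcal{L}_0$) and verify at each stage that the truncation errors from Perron's formula and the tails in the imaginary directions are absorbed into $O(x^{\langle \bm{c}, \bm{\beta}\rangle-\theta})$, using only the polynomial growth furnished by hypothesis (3). The second delicate point is the combinatorial identification of the extremal residue with the explicit integral $\mathcal{I}_{\bm{\beta}}(x)$, where (C1) and (C2) must be combined to rule out contributions from every stratum other than the deepest one.
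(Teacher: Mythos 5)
You should know at the outset that the paper does not prove this statement at all: it is quoted, with attribution, from de la Bret\`eche \cite[Th\'eor\`eme 2 (iii), (iv)]{B}, so there is no internal proof to compare against and the benchmark is the cited source. Measured against that, your outline is directionally faithful to the framework in which the result lives: the polar divisor $\prod_i l^{(i)}(\bm{s})$, the bookkeeping by which the Perron kernels $1/s_j$ at coordinates with $c_j=0$ produce exactly the extra forms $l^{(q+i)}(\bm{s})=s_{j_i}$, the role of (C1) in decoupling $H(\bm{0})$ from the geometry, and the role of (C2) in suppressing lower strata are all correctly identified. Your analysis of part (2) is the strongest piece: passing to logarithmic coordinates, recognizing the linear program whose value is $\langle\bm{c},\bm{\beta}\rangle\log x$ (dual feasibility via $\lambda_j=c_j$), and reading the cone condition $\mathcal{B}\in con^*\left(\{l^{(1)},\dots,l^{(q+w)}\}\right)$ as a strict-complementarity condition that makes the optimal face non-degenerate, giving $\mathcal{I}_{\bm{\beta}}(x)\asymp x^{\langle\bm{c},\bm{\beta}\rangle}(\log x)^{q+w-n}$, is correct and checkable (e.g.\ it reproduces $\sum_{n\le x}1/n\sim\log x$ in the toy case $q=w=n=1$).

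The gaps are in part (1), and they are genuine. First, your opening display is already problematic: the sharp-cutoff multidimensional Perron integral with kernel $\prod_j x^{\beta_j s_j}/s_j$ is not absolutely convergent, so the subsequent contour translations are unjustified as stated; one must first regularize (Riesz/Ces\`aro smoothing) and remove the smoothing afterwards, which is how this class of results is actually handled. Second, and more seriously, the step ``push the contour past $\bm{s}=\bm{0}$ and collect iterated residues along the hyperplane arrangement'' is precisely the content of the theorem and is left as a black box exactly where the work lies: the arrangement is badly non--normal-crossing (in this paper's application, $\binom{k}{2}$ forms of rank $k$), iterated residues depend on the order of the hyperplanes, and there is no canonical ``residue at the deepest stratum'' to which your claim $H(\bm{0})\times(\text{geometric integral})$ can be attached. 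The mechanism that makes (1) provable is the factorization supplied by (C1): near $\bm{0}$ write $G(\bm{z})-G(\bm{0})=\sum_i z_i G_i(\bm{z})$, hence $H(\bm{s})-H(\bm{0})=\sum_i l^{(i)}(\bm{s})H_i(\bm{s})$, so each difference term cancels one polar factor; one then inducts on the excess $\#\mathcal{L}-\mathrm{Rank}(\mathcal{L})$, with (C2) guaranteeing that every reduced family still spanning $\mathcal{B}$ has strictly smaller excess and hence contributes only $O((\log x)^{\rho-1})$, while the $H(\bm{0})$ piece is matched with $\mathcal{I}_{\bm{\beta}}(x)$ because the Mellin transform of that integral regenerates exactly the polar factor $\prod_i l^{(i)}(\bm{s})^{-1}$ with the weight $\prod_i y_i^{l^{(i)}(\bm{c})-1}$. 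Without this (or an equivalent device) your paragraph on (C2) asserts the conclusion rather than proving it. A final caveat: as stated, part (2) does not assume (C1)--(C2), so deducing it from part (1), as you do, silently strengthens its hypotheses; you would need either to verify that (C1)--(C2) are available in the situations where (2) is invoked, or to prove the lower bound on $\deg Q_{\bm{\beta}}$ directly by a positivity/comparison argument.
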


\begin{proof}[Proof of Theorem \ref{thm:2}]
From (\ref{euler-prod}), we can easily find that $f(n_1,\dots,n_k)$ is non-negative and in Lemma \ref{lem:convolution}, we proved that $f(n_1, \dots,n_k)$ has an absolutely convergent series $F(s)$ for $\re(s_j) > 1/2$ for all $1 \leq j \leq k$. This shows that $f(n_1, \dots,n_k)$ satisfies (1) of Theorem \ref{thm:breteche1}. 

Next, we write $\bm{1/2} = (1/2,\dots,1/2)$. Then $F(\bm{s} + \bm{1/2})$ is an absolutely convergent series for $\re(s_j)> 0$. Therefore, we define the function
\begin{align}
\label{H(s)}
H(\bm{s}):=F(\bm{s} + \bm{1/2})\prod_{\substack{I \subseteq \{1,\dots,k\} \\ \abs{I}=2}}s_I.    
\end{align}
So, we take $\mathcal{L} = \{ s_i+s_j \mid i,j \in \{1,\dots,k\} \}$ in (2) of Theorem \ref{thm:breteche1}.
Then, by Lemma \ref{lem:convolution} it can be rewritten as
\begin{align*}
    H(\bm{s})&=\left( \prod_{\substack{I \subseteq \{1,\dots,k\} \\ \abs{I} =2}} \zeta\left(s_I+\frac{\abs{I}}{2}\right)s_I\right)E(\bm{s}+\bm{1/2}).
\end{align*}
For $I \subseteq \{1,\dots,k\}$ such that $\abs{I}=2$, then there exists $\delta_1 \in (0,1/4)$ such that $\zeta\left(s_I+\abs{s_I}/2\right)s_I = \zeta\left(s_I+1\right)s_I$ has analytic continuation to the plane $\re(s_j)>-\delta_1$ for $j \in I$ and $\delta_1>0$. Furthermore, $E(\bm{s}+\bm{1/2})$ is holomorphic in $\re(s_j)>-\delta_1$ for $1 \leq j \leq k$. Since $c_j = 1/2$ for all $1 \leq j \leq k$, we can take $h^{(j)}(\bm{s}) = s_j$ in the notation of Theorem \ref{thm:breteche1}. Then we put $\delta_3=\delta_1$. Therefore $f(n_1,\dots,n_k)$ also satisfies (2) of Theorem \ref{thm:breteche1}.

Finally, for $\re(s_j) > -1/4$, by applying the convexity bound of the Riemann zeta-function,
\[
\zeta\left(s_I+\frac{\abs{I}}{2}\right)s_I \ll (\abs{s_I}+1)^{1- \frac{1}{2}\min \{0, \re(s_I)\}}
\]
holds for $\abs{I}=2$. The above argument shows that $H(s)$ satisfies (3) of Theorem \ref{thm:breteche1} with $\delta_2 = 1/2$.

Since $q= \binom{k}{2}, w=0$ and the rank of the collection of linear forms $s_I$ is $k$, we have 
\[
\sum_{n_1,\dots,n_k \leq Y} f(n_1,\dots,n_k) = Y^\frac{k}{2} Q(\log Y) + O(Y^{\frac{k}{2}-\theta}),
\]
where $Q(\log Y)$ is a polynomial of degree at most $\binom{k}{2}-k$.

The remained task is to determine the degree and the leading coefficient of the polynomial $Q$ by using Theorem \ref{thm:breteche2}. Since $\bm{c}=\bm{1/2}$, we know that $w=0$, and then $Rank\left(l^{(1)},\dots,l^{(q)} \right) =k$. Moreover as $s_I \to 0$, $\zeta(s_I+\abs{I}/2)s_I \to 1$ if $\abs{I}=2$. From the Euler product,it holds that $E(\bm{1/2})$ does not vanish. Hence $H(\bm{0})\neq 0$. At last, we see that $\mathcal{B} = \sum_{i=1}^k \bm{e}_i^*(\bm{s}) \in con^* \left(\{ l^{(1)},\dots,l^{(q)} \} \right)$ for $\bm{e}_i^*(\bm{s}) = s_i$. Therefore by Theorem \ref{thm:breteche2} (2), we have $\operatorname{deg}(Q) = \binom{k}{2}-k$. 

We denote the leading coefficient of $Q$ by $Z_k$. We can easily check that the assumptions (C1) and (C2) in Theorem  \ref{thm:breteche2} (1) are satisfied. In fact, by (\ref{E(s)}) and (\ref{H(s)}), it is clear that such a function $G$ exists. Also, for $\mathcal{L}_0 =\left( l^{(i)}\right)_{1\leq i \leq q}$, we already showed that $Rank(\mathcal{L}_0)=k$ and $\#\mathcal{L}_0= \binom{k}{2}$. Hence, there is no subfamily $\mathcal{L}^\prime$ of $\mathcal{L}_0$ such that $\mathcal{L}^\prime \neq \mathcal{L}_0$, $\mathcal{B} \in Vect(\mathcal{L}^\prime)$ and $\#\mathcal{L}^\prime=\#\mathcal{L}_0$.

Hence, by applying Theorem \ref{thm:breteche2} (1), we have 
\begin{align*}
    \mathcal{I} &= \lim_{x \to \infty} \frac{1}{x^{\frac{k}{2}}(\log x)^{\binom{k}{2}-k}}\mathcal{I}(x) = \lim_{x \to \infty} \frac{1}{x^{\frac{k}{2}}(\log x)^{\binom{k}{2}-k}}\int_{\mathcal{A}(x)} dy_1\dots dy_q,
\end{align*}
where
\[
\mathcal{A}(x) = \left\{ (y_1,\dots,y_q) \in [1,\infty)^q \middle| \prod_{i=1}^q y_{i}^{l^{(i)}(\bm{e}_j)} \leq x \text{ for all } 1 \leq j \leq k \right\},
\]
because it holds that $l^{(i)}(\bm{c})=1$ for all $1 \leq i \leq q$ and $l^{(i)}(\bm{e}_j)=1$ for $k-1$ linear forms in $\mathcal{L}$, and $l^{(i)}(\bm{e}_j)=0$ otherwise. 

Moreover, since $\zeta(s_I+\abs{I}/2)s_I \to 1$ as $s_I \to 0$ for $\abs{I}=2$, by Lemma \ref{lem:convolution} we obtain
\begin{align*}
H(\bm{0}) &=\prod_{p} \left( 1-\binom{k}{2} \frac{1}{p(p+1)} + \sum_{h=1}^{\lfloor \frac{k}{2} \rfloor} \sum_{\ell = 1}^{\binom{k}{2}} (-1)^{\ell}\binom{k}{2h} \frac{1}{p^{h+\ell-1}(p+1)} \right).
\end{align*}
Therefore, we complete the proof.
\end{proof}

\begin{rem}
    If $k=2$, de la Bret\`{e}che's method also works. In fact, in the case $k=2$, we take $q=1$ and $l^{(1)}(\bm{s})=s_1+s_2$. Then $Rank\left(l^{(1)} \right) =1$. Thus the degree of the polynomial $Q$ is $\binom{2}{2}-1 =0$. 
\end{rem}

\begin{ack} 
The author would like to thank Professor Kohji Matsumoto for his valuable comments. The author would also like to thank Kittipong Subwattanachai for reading the draft carefully and pointing out many typos. The author is supported by Grant-in-Aid for JSPS Research Fellow (Grant Number:24KJ1235).
\end{ack}

\end{document}